\theoremstyle{definition}
\newtheorem{definition}{Definition}[section]
\theoremstyle{plain}
\newtheorem{theorem}[definition]{Theorem}
\theoremstyle{remark}
\title{\textbf{Ternary $\Gamma$-Semirings as a Novel Algebraic Framework
for Learnable Symbolic Reasoning}}
\author{\small
Chandrasekhar Gokavarapu$^{1,2}$\hspace{5mm} Dr D Madhusudhana Rao$^{3,4}$\\[10pt]
\small $^1$Lecturer in Mathematics,
Government College (A), Rajahmundry, A.P., India\\[2pt]
 \small $^2$Research Scholar, Department of Mathematics,
Acharya Nagarjuna University, Guntur, A.P., India\\[2pt]
\texttt{\small chandrasekhargokavarapu@gmail.com}\\ [10pt]
\small $^3$Lecturer in  Mathematics, Government College For Women(A), Guntur, Andhra Pradesh, India,\\
\small $^4$Research Supervisor, Dept.  of Mathematics,Acharya Nagarjuna University, Guntur, A.P., India,\\
\texttt{\small dmrmaths@gmail.com}}
\date{}
\begin{document}
\maketitle
\begin{abstract}
Binary semirings such as the tropical, log, and probability semirings form the algebraic backbone (see \cite{Golan1999,HebischWeinert1998,Golan1992}) of several classical and modern neural inference systems, powering tasks ranging from Viterbi decoding and dynamic programming to differentiable probabilistic reasoning. Despite this success, binary semirings are fundamentally limited to pairwise composition, making them poorly suited for modeling inherently triadic structures—such as subject--predicate--object triples in knowledge graphs, relational constraints in logic, and multi-entity interactions in symbolic reasoning. Existing neural architectures typically approximate these higher-order dependencies by flattening or factorizing them into binary interactions, resulting in loss of structure, weaker inductive biases, and reduced interpretability.

This paper introduces the \emph{Neural Ternary Semiring} (NTS), a learnable and differentiable algebraic framework grounded in the theory of \emph{ternary $\Gamma$-semirings} (TGS). The central idea is to elevate the ternary multiplication $[x,y,z]$ to a first-class, trainable operator parameterized by neural networks and regularized by algebraic constraints derived from the axioms of TGS. We show how NTS unifies algebraic semantics with neural computation: its architecture ensures approximate ternary associativity and distributivity, while its differentiable structure integrates seamlessly into gradient-based learning. The proposed NTS operator provides a natural mechanism for representing triadic relationships, enabling end-to-end training on tasks involving higher-order symbolic dependencies.

A theoretical soundness result demonstrates that when the algebraic regularization converges to zero, the learned ternary operator approaches a valid TGS operation. We outline a validation pipeline for triadic reasoning tasks—including knowledge-graph completion and rule-based inference—where NTS is hypothesized to outperform binary semiring-based models. This work positions ternary $\Gamma$-semirings as a mathematically principled and computationally effective foundation for learnable symbolic reasoning.
\end{abstract}

\noindent\textbf{Keywords:} 
Ternary $\Gamma$-semirings, neural ternary semiring, symbolic reasoning, multi-relational learning, neuro-symbolic AI.

\medskip
\noindent\textbf{Mathematics Subject Classification (2020):} 
08A05, 08A30, 16Y60, 68T07


\section{Introduction}

\subsection{Limitations of binary semiring frameworks in AI}

Semirings provide one of the most elegant bridges between algebra and computation. Classical structures such as the tropical semiring, log semiring, Viterbi semiring, and probability semiring support a wide range of algorithms including shortest paths, dynamic programming, belief propagation, and structured prediction. Neural variants of these systems—such as differentiable dynamic programming layers—extend the semiring concept by embedding operations into trainable modules.

However, a fundamental limitation persists: these systems operate on a \emph{binary} multiplication operator
\[
x \otimes y,
\]
which inherently models pairwise composition. Many modern AI problems are triadic or higher-arity in nature, especially those involving symbolic dependencies. Examples include:
\begin{itemize}[leftmargin=2em]
    \item knowledge graph triples $(h,r,t)$;
    \item logical rules of the form ``if $A$ and $B$ then $C$'';
    \item ternary relations in databases and constraint satisfaction;
    \item multi-agent and context-dependent decision processes.
\end{itemize}

To handle such tasks within binary frameworks, existing models introduce artificial encodings—tensor flattening, auxiliary embedding fusion, or multi-step composition layers—which obscure the underlying symbolic meaning and inject undesirable inductive biases. These approximations hinder the expressivity of binary semiring models and limit their ability to capture complex relational patterns.

\subsection{A need for algebraic structures with native triadic composition}

The natural mathematical response to these challenges is to seek algebraic frameworks where triadic interaction is primitive rather than constructed. This motivates the study of \emph{ternary algebraic systems}, where the central operator combines three arguments simultaneously. Among these, \emph{ternary $\Gamma$-semirings} (TGS) offer a particularly rich and structured foundation \cite{Nobusawa1964,Barnes1966}:
\begin{itemize}[leftmargin=2em]
    \item they generalize classical semirings and $\Gamma$-semirings;
    \item their ternary operation $[x,y,z]$ acts as a native triadic product;
    \item the parameter set $\Gamma$ can model contextual or relational modifiers;
    \item they possess a well-developed ideal, module, and spectral theory.
\end{itemize}

TGS therefore provide an algebraic mechanism for triadic reasoning, offering a principled alternative to heuristic neural fusion methods.A broad range of symbolic reasoning problems is governed by triadic relationships:
knowledge-graph triples, logical conditionals, ternary constraints, and
context-dependent decisions. Knowledge graphs in particular encode facts as
triples of the form $(h,r,t)$, where the meaning of a relation necessarily
depends on a three-way interaction \cite{Yang2015,Nickel2016}. Such triadic
phenomena illustrate that many real-world symbolic structures cannot be reduced
to pairwise compositions without losing essential relational information.
.
Such phenomena reveal that real-world symbolic processes cannot always be reduced to crisp,
binary logical interactions. Indeed, one of the earliest formal departures from strict
two-valued reasoning appears in Zadeh’s fuzzy set theory \cite{Zadeh1965}, which demonstrated
that many inferential processes naturally require graded or non-binary semantics.  
This historical observation reinforces the central motivation of this work: symbolic reasoning
benefits from algebraic structures that admit richer, higher-arity composition than those
available in classical binary semiring frameworks.

\subsection{From algebra to AI: motivation for the Neural Ternary Semiring (NTS)}

The central objective of this work is to integrate the algebraic expressivity of TGS
with the learnability and scalability of modern neural models, aligning with the broader
goals of neuro-symbolic computing which aim to unify logical structure with data-driven
learning \cite{Garcez2019,GoriSerafini2018}. We introduce the Neural Ternary Semiring (NTS),
a framework in which the ternary operation $[x,y,z]$ becomes a differentiable, trainable
operator subject to algebraic regularization and which:

\begin{itemize}[leftmargin=2em]
    \item treats the ternary operation $[x,y,z]$ as a trainable operator;
    \item parameterizes the operator using tensors, attention, or neural networks;
    \item enforces approximate TGS axioms through algebraic regularizers;
    \item enables backpropagation through triadic reasoning steps;
    \item maintains interpretability by preserving the triadic structure explicitly.
\end{itemize}

The NTS thus provides a mathematically grounded and computationally flexible backbone for symbolic reasoning tasks that inherently require multi-entity interaction.

\subsection{Contributions of this paper}

This paper makes the following contributions:
\begin{itemize}[leftmargin=2em]
    \item We introduce a vector-valued, probabilistic variant of ternary $\Gamma$-semirings suitable for integration into neural pipelines.
    \item We define a parametric ternary operator $[x,y,z]_\gamma$ that is differentiable, expressive, and amenable to algebraic regularization.
    \item We develop a loss formulation combining task-specific objectives with TGS-axiom regularizers enforcing approximate associativity and distributivity.
    \item We present a soundness theorem showing that the limit of vanishing algebraic violation yields a valid TGS operator.
    \item We propose an evaluation strategy for triadic reasoning tasks, including knowledge-graph link prediction and rule-based inference.
\end{itemize}

\subsection{Paper organization}

Section~\ref{sec:nts} introduces the NTS architecture and its algebraic foundations.  
Section~\ref{sec:validation} outlines the validation methodology and the soundness theorem.  
Section~\ref{sec:discussion} concludes with insights and future directions.

\section{The Neural Ternary Semiring Framework}\label{sec:nts}

This section develops the proposed \emph{Neural Ternary Semiring} (NTS) in a mathematically
coherent and computationally implementable form.
The objective is twofold:
\begin{enumerate}[label=(\roman*)]
    \item to establish a principled algebraic foundation based on ternary $\Gamma$-semirings (TGS),
    \item to construct a differentiable, learnable operator $[x,y,z]_\gamma$ suitable for
          symbolic---and especially triadic---reasoning tasks in modern AI.
\end{enumerate}
Throughout, $S \subseteq \mathbb{R}^d$ denotes the embedding space of entities, relations, or
logical atoms, equipped with coordinatewise addition $+$ and zero element $0$.
The parameter set $\Gamma$ contains context or relation-type embeddings.Neural architectures embed algebraic operations into differentiable modules,
allowing gradient-based learning to propagate through structured computations
\cite{Goodfellow2016,Bengio2013Representation}. Despite their expressive power,
these systems typically rely on binary interaction layers or pairwise fusion
operators, limiting their ability to capture higher-order symbolic dependencies.

\subsection{Part I: Algebraic Foundation}\label{subsec:alg_foundation}

\subsubsection{Ternary  \texorpdfstring{$\Gamma$}{Gamma}-semiring structure}

A \emph{ternary $\Gamma$-semiring} consists of a triple $(S,+,[\,\cdot,\cdot,\cdot\,]_\Gamma)$ where:
\begin{itemize}[leftmargin=2em]
    \item $(S,+,0)$ is a commutative monoid,
    \item $\Gamma$ is a nonempty index set (context, relation, or modality),
    \item $[x,y,z]_\gamma \in S$ is a ternary operation for each $\gamma \in \Gamma$.
\end{itemize}

The ternary operator obeys distributivity in each argument. For example:
\[
[x_1+x_2,\; y,\; z]_\gamma
    = [x_1,y,z]_\gamma + [x_2,y,z]_\gamma.
\]
Similar identities hold in the second and third coordinates.
Associativity takes a generalized form; one common variant is the
\emph{left-nested} equality:
\[
[x,y,[u,v,w]_\gamma]_\delta
    = [[x,y,u]_\gamma, v, w]_\delta,
\]
expressing consistency of multistep ternary composition.

These axioms guarantee that triadic computation is algebraically well-posed.
Unlike binary semirings, which support only pairwise products, TGS provide \emph{native} semantics for triads.

\subsubsection{Why the ternary operator is fundamental}

Binary approximations encode $(x,y,z)$ using sequential or tensorized binary
interactions, such as $x \otimes (y \otimes z)$ or $(x \otimes y) \otimes z$.
These constructions introduce ordering artefacts and fail to reflect the true
symmetry or relational meaning of a triple. Earlier attempts to model higher-order
interactions through tri-linear or tensor-based architectures, such as the
three-way model of Nickel et al.\ \cite{Nickel2011} and the Neural Tensor Network
of Socher et al.\ \cite{Socher2013}, highlight both the expressive potential and
the limitations of forcing ternary structure into binary-centric frameworks.  
These observations motivate the need for a native ternary operator that preserves
the intrinsic three-way semantics of symbolic relationships.
.
Such formulations impose ordering artefacts and lose the symmetry or relational
interpretation of a triple. By contrast, $[x,y,z]_\gamma$:
\begin{itemize}[leftmargin=2em]
    \item treats all three arguments simultaneously,
    \item admits relation-specific modulation by $\gamma$,
    \item preserves structural invariants (e.g., permutation symmetries and contextual dependencies),
    \item reduces unnecessary learnable interactions by eliminating spurious binary factors.
\end{itemize}

This motivates elevating $[x,y,z]$ to a first-class operator within neural
reasoning systems.

\subsection{Part II: Learnable Parameterization of the Ternary Operator}

We now construct a differentiable operator
\[
[x,y,z]_\gamma = f_\theta(x,y,z,\gamma),
\]
where $\theta$ denotes learnable parameters.
The design principles are:
\begin{enumerate}[label=(\alph*)]
    \item the operator must be expressive enough to model triadic phenomena,
    \item the operator must admit gradient-based training,
    \item the operator must \emph{approximately} satisfy TGS axioms under regularization.
\end{enumerate}

We propose two families of architectures that meet these criteria.

\subsubsection{(A) Tensor-based Ternary Fusion}

Tensor contraction provides a natural mechanism for capturing three-way
interactions between embeddings and has been explored in earlier neural tensor
architectures such as the Neural Tensor Network of Socher et al.\
\cite{Socher2013}. Let $x,y,z \in S$ and $\gamma$ be represented by a trainable
embedding $g_\gamma \in \mathbb{R}^d$. We define
\[
[x,y,z]_\gamma
    = \sigma\left( W^{(1)} (x \otimes y \otimes z)
                + W^{(2)} (x \otimes g_\gamma)
                + W^{(3)} z
                + b_\gamma \right),
\]
where $W^{(1)},W^{(2)},W^{(3)}$ are learnable contraction maps and $\sigma$ is a
nonlinearity.

This form captures high-order interactions directly.  
It is suitable for small and medium-scale tasks where expressivity dominates computational cost.

\subsubsection{(B) Attention-based Ternary Aggregation}

Attention mechanisms provide a parameter-efficient alternative to tensor
contraction and connect naturally to message-passing architectures used in
graph neural networks, particularly relational GCNs and inductive frameworks
such as GraphSAGE \cite{Schlichtkrull2018,Hamilton2017}. For large-scale
knowledge graphs or rule-based datasets, we define a scalable ternary operator:
\[
[x,y,z]_\gamma
    = \alpha_1(\gamma,x,y,z)\, x
    + \alpha_2(\gamma,x,y,z)\, y
    + \alpha_3(\gamma,x,y,z)\, z,
\]
where the coefficients lie in a probability simplex computed via
\[
(\alpha_1,\alpha_2,\alpha_3)
        = \mathrm{softmax}\!\left(
            u_\gamma^\top
            \begin{bmatrix}
            x \\ y \\ z
            \end{bmatrix}
          \right).
\]
.

This formulation:
\begin{itemize}[leftmargin=2em]
    \item preserves the explicit triadic structure,
    \item adapts naturally to context $\gamma$,
    \item scales linearly in dimension,
    \item is compatible with graph neural network layers.
\end{itemize}

Both architectures can be substituted interchangeably within NTS depending on application and capacity requirements.

\subsection{Part III: Algebraic Regularization and Training}\label{subsec:training}

The NTS must learn both:
\begin{itemize}[leftmargin=2em]
    \item task-relevant behavior (e.g., correct prediction of a missing entity),
    \item algebraic discipline (approximate satisfaction of TGS axioms).
\end{itemize}

We therefore define a composite objective:
\[
\mathcal{L}
    = \mathcal{L}_{\mathrm{task}}
    + \lambda_{\mathrm{assoc}}\mathcal{L}_{\mathrm{assoc}}
    + \lambda_{\mathrm{dist}}\mathcal{L}_{\mathrm{dist}}.
\]

The algebraic regularizers enforce structural identities that ensure the learned
operator behaves consistently under composition. These consistency properties
mirror the categorical intuition that composition of multi-argument morphisms
must respect associativity and continuity in their parameters, a viewpoint
developed extensively in categorical algebra \cite{MacLane1998}. Such structural
considerations justify treating the ternary operator $[x,y,z]_\gamma$ as an
algebraically coherent map within a stable compositional framework.

\subsubsection{Task Loss}

Probabilistic formulations of relational prediction naturally motivate the use of
log-likelihood objectives and normalized scoring functions, following standard
principles in probabilistic machine learning \cite{Murphy2012}. For a triple
$(h,r,t)$ in a dataset $\mathcal{D}$, the task loss is defined as
\[
\mathcal{L}_{\mathrm{task}}
    = - \sum_{(h,r,t)\in\mathcal{D}}
        \log p_\theta(h,r,t),
\]
where the score is computed via
\[
s_\theta(h,r,t)
    = \|[x_h,x_r,x_t]_{\gamma_r}\|.
\]
Alternative choices include margin-based ranking losses or cross-entropy,
depending on the dataset and evaluation protocol.

\subsubsection{Regularizer for Ternary Associativity}

To enforce approximate associativity, we sample elements and evaluate:
\[
\mathcal{L}_{\mathrm{assoc}}
    = \mathbb{E}\bigl[
        \|[x,y,[u,v,w]_\gamma]_\delta
        - [[x,y,u]_\gamma,v,w]_\delta\|^2
      \bigr].
\]

Gradients penalize violations of the TGS associativity equation,
driving the learned operator towards a consistent ternary structure.

\subsubsection{Regularizer for Distributivity}

Similarly, distributivity is encouraged using:
\[
\mathcal{L}_{\mathrm{dist}}
    = \mathbb{E}\bigl[
        \|[x+y,u,v]_\gamma
            - [x,u,v]_\gamma
            - [y,u,v]_\gamma\|^2
      \bigr],
\]
with analogous penalties in the second and third arguments.
These regularizers formalize the intuition that the ternary operator must be compatible with the additive monoid structure of $S$.

\subsection{Interpretation and Computational Properties}

The NTS framework enjoys the following properties:

\paragraph{Native modeling of triadic structure.}
Unlike pairwise models that encode triples indirectly,
NTS preserves triadic meaning at the operator level.
This enhances interpretability and avoids spurious artifacts of binary factorization.

\paragraph{Compatibility with gradient-based learning.}
Every component of $[x,y,z]_\gamma$ is differentiable,
and the algebraic regularizers are fully compatible with backpropagation.

\paragraph{Flexible inductive bias.}
The TGS axioms act as a structural prior:
they restrict the hypothesis space to algebraically meaningful operators,
thus improving generalization and stability.

\paragraph{Scalability via attention-based models.}
The attention-type formulation scales linearly and is suitable for large relational datasets.

These observations justify NTS as both a mathematically principled and computationally viable architecture for symbolic reasoning in AI.

\section{Hypothesized Results and Validation}\label{sec:validation}

This section provides a theoretical and empirical foundation for validating the
proposed Neural Ternary Semiring (NTS).  
We begin by stating a soundness theorem that formalizes the convergence of the
learned ternary operator towards a valid ternary $\Gamma$-semiring when the
algebraic regularization terms vanish.  
We then describe an experimental protocol designed to evaluate NTS on
real-world tasks involving triadic or higher-order symbolic dependencies.  
The methodology balances mathematical rigor with practical feasibility, making it 
suitable for both theoretical assessment and empirical benchmarking.

\subsection{A Soundness Theorem for NTS}

The Neural Ternary Semiring is constructed to approximate the axioms of a ternary 
$\Gamma$-semiring.  
The following theorem formalizes this approximation.

\begin{theorem}[Soundness of Neural Ternary Semiring]\label{thm:soundness}
Let $(S,+)$ be a finite-dimensional real vector space equipped with coordinatewise 
addition, and let $[x,y,z]_\theta$ be a differentiable ternary operator parameterized 
by $\theta \in \Theta$.  
Assume the total loss
\[
\mathcal{L}(\theta)
= \mathcal{L}_{\mathrm{task}}(\theta)
+ \lambda_{\mathrm{assoc}}\, \mathcal{L}_{\mathrm{assoc}}(\theta)
+ \lambda_{\mathrm{dist}}\,  \mathcal{L}_{\mathrm{dist}}(\theta)
\]
is minimized by a sequence $(\theta_n)$ such that:
\[
\lim_{n\to\infty} 
\mathcal{L}_{\mathrm{assoc}}(\theta_n) = 0
\qquad\text{and}\qquad
\lim_{n\to\infty} 
\mathcal{L}_{\mathrm{dist}}(\theta_n) = 0.
\]
If $(\theta_n)$ converges to a limit $\theta^\ast$ in parameter space, then 
the operator $[x,y,z]_{\theta^\ast}$ satisfies the ternary $\Gamma$-semiring 
associativity and distributivity identities for all $x,y,z \in S$ and all 
$\gamma \in \Gamma$.  
Hence, $(S,+,[\cdot,\cdot,\cdot]_{\theta^\ast})$ is a valid ternary $\Gamma$-semiring.
\end{theorem}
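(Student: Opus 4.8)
The plan is to prove the statement by a soft continuity-and-support argument, using that each algebraic regularizer is the expectation of a nonnegative integrand that is continuous in $\theta$, so that no quantitative estimate on the neural parameterization is required.

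First I would introduce the pointwise defect maps. For a sampled tuple $\xi = (x,y,u,v,w,\gamma,\delta)$ drawn from the sampling distribution $\mu$ on the admissible domain $\Xi$, set $R_{\mathrm{assoc}}(\xi;\theta) = \bigl\|[x,y,[u,v,w]_\gamma]_\delta - [[x,y,u]_\gamma,v,w]_\delta\bigr\|^2$, and let $R_{\mathrm{dist}}(\xi;\theta)$ denote the analogous squared distributivity defect, summed over the three coordinate variants. By definition $\mathcal{L}_{\mathrm{assoc}}(\theta) = \mathbb{E}_{\xi\sim\mu}\bigl[R_{\mathrm{assoc}}(\xi;\theta)\bigr] \ge 0$ and $\mathcal{L}_{\mathrm{dist}}(\theta) = \mathbb{E}_{\xi\sim\mu}\bigl[R_{\mathrm{dist}}(\xi;\theta)\bigr] \ge 0$. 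Since $[x,y,z]_\theta = f_\theta(x,y,z,\gamma)$ is differentiable in $\theta$, the maps $\theta \mapsto R_{\mathrm{assoc}}(\xi;\theta)$ and $\theta \mapsto R_{\mathrm{dist}}(\xi;\theta)$, being finite compositions of $f_\theta$ with coordinatewise addition and the Euclidean norm, are continuous on $\Theta$ for every fixed $\xi$.

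Next I would pass to the limit. By Fatou's lemma applied to the nonnegative, pointwise-continuous integrands, $\theta \mapsto \mathcal{L}_{\mathrm{assoc}}(\theta)$ and $\theta \mapsto \mathcal{L}_{\mathrm{dist}}(\theta)$ are lower semicontinuous; hence $\mathcal{L}_{\mathrm{assoc}}(\theta^\ast) \le \liminf_{n} \mathcal{L}_{\mathrm{assoc}}(\theta_n) = 0$, and nonnegativity forces $\mathcal{L}_{\mathrm{assoc}}(\theta^\ast) = 0$; likewise $\mathcal{L}_{\mathrm{dist}}(\theta^\ast) = 0$. (No dominated-convergence or compactness hypothesis is needed here.) A nonnegative function with vanishing integral is zero $\mu$-almost everywhere, so $R_{\mathrm{assoc}}(\cdot;\theta^\ast) = 0$ and $R_{\mathrm{dist}}(\cdot;\theta^\ast) = 0$ $\mu$-a.e. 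Using that $\mu$ has full support on $\Xi$ and that $\xi \mapsto R_{\bullet}(\xi;\theta^\ast)$ is continuous (because $f_{\theta^\ast}$ is continuous in its vector and context arguments), the zero set of each defect is both closed and dense in $\Xi$, hence equal to $\Xi$. Therefore $[x,y,[u,v,w]_\gamma]_\delta = [[x,y,u]_\gamma,v,w]_\delta$ and the three distributivity identities hold for all $x,y,u,v,w \in S$ and all $\gamma,\delta \in \Gamma$. Since $(S,+,0)$ is a commutative monoid by hypothesis and is untouched by the limit, all axioms of a ternary $\Gamma$-semiring are satisfied, which is the conclusion.

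The step I expect to be the crux is the upgrade from ``$\mathcal{L}_{\bullet}(\theta^\ast) = 0$'' to the identities holding for \emph{every} $x,y,z$ rather than merely $\mu$-almost everywhere: this is exactly where one must assume the sampling distribution has full support on the domain of interest and that the operator is jointly continuous in its arguments, so that density closes the gap. A secondary technical point is establishing (lower semi)continuity of the expectation-valued regularizers in $\theta$, which I would handle via Fatou as above, sidestepping any growth assumption on the neural parameterization. I would also note that convergence of $(\theta_n)$ enters only through $\mathcal{L}_{\bullet}(\theta^\ast) \le \liminf_n \mathcal{L}_{\bullet}(\theta_n)$, so in fact every limit point of the sequence — not only a unique limit — inherits the exact TGS identities.
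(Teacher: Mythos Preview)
Your argument is correct and follows the same line as the paper's proof sketch: show that the regularizers vanish at $\theta^\ast$, deduce that the defect maps are zero $\mu$-almost everywhere, and then use continuity of the operator in its arguments together with full support of the sampling distribution to upgrade this to the identities holding everywhere. Your use of Fatou's lemma to obtain lower semicontinuity of $\theta\mapsto\mathcal{L}_{\bullet}(\theta)$ is a cleaner and more explicit way to pass to the limit than the paper's sketch, and your observation that full support of $\mu$ and joint continuity in $(x,y,z,\gamma)$ are the unstated hypotheses actually doing the work is exactly right.
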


\begin{proof}[Proof Sketch]
The algebraic regularizers $\mathcal{L}_{\mathrm{assoc}}$ and 
$\mathcal{L}_{\mathrm{dist}}$ are defined as expectations over squared residuals 
of the TGS axioms evaluated on random samples from $S$ and $\Gamma$.  
If $\mathcal{L}_{\mathrm{assoc}}(\theta_n)\to 0$, then the associativity residuals 
converge to zero for almost all sampled tuples, and by continuity of  
$[x,y,z]_\theta$, the identity extends to all of $S$ in the limit.  
A similar argument applies to distributivity in each argument.  
Since all TGS axioms used by NTS are polynomial equalities and the parameterization 
$f_\theta$ is continuous in $\theta$, convergence of the regularizers implies 
pointwise convergence of the identities.  
Thus $[x,y,z]_{\theta^\ast}$ satisfies the required TGS axioms.
\end{proof}

This theorem places NTS on firm algebraic ground: under idealized 
training conditions, the learned ternary operator becomes a genuine TGS operator.

\subsection{Experimental Protocol}

To validate the proposed framework, we design an experimental protocol that 
reflects the inherent triadic nature of many symbolic tasks.  
We outline suitable datasets, scoring functions, baselines, and training details.

\subsubsection{Datasets}

Although the NTS architecture is generic, its strength lies in tasks involving 
triadic relations.  
We select datasets that feature multi-relational structure and require reasoning 
over triples:

\begin{itemize}[leftmargin=2em]
    \item \textbf{Knowledge Graph Datasets:}  
    Standard benchmarks where edges correspond to triples $(h,r,t)$.  
    These datasets include multiple relation types and inherently triadic structure.
    
    \item \textbf{Logical Rule Templates:}  
Manually curated triadic rules of the form “if $A$ and $B$ then $C$”, designed
to test structured inference and higher-order reasoning. These tasks are closely
related to the objectives of neural-symbolic systems that unify logical rules
with differentiable learning \cite{Garcez2019,GoriSerafini2018}, providing a
natural benchmark for evaluating whether NTS can capture rule-based relational
dependencies.

    \item \textbf{Synthetic Triadic Constraint Data:}  
    Constructs where the validity of a triple depends on all three elements 
    together in a non-separable manner.  
    Such datasets allow controlled ablation studies.
\end{itemize}

These datasets enable us to assess the advantages of NTS over binary models.

\subsubsection{Training Setup}

Each entity or symbol is assigned an embedding $x\in S$.  
Each relation or rule type obtains an embedding $\gamma\in \Gamma$.  
Given $(h,r,t)$, the ternary operator computes:
\[
z_{hrt} = [x_h,x_r,x_t]_{\gamma_r}.
\]
The score for predicting the tail entity is
\[
s(h,r,t) = -\|z_{hrt}\|.
\]
During training, we minimize:
\[
\mathcal{L}_{\mathrm{task}}
=
-\log p_\theta(h,r,t)
\quad\text{or}\quad
\mathcal{L}_{\mathrm{task}}
=
\mathrm{max}\{0,\; 1 - s(h,r,t) + s(h,r,t')\},
\]
where $t'$ is a corrupted tail.

Training uses Adam optimization with early stopping based on validation loss.

\subsection{Evaluation Methodology}

The evaluation protocol measures:
\begin{enumerate}[label=(\alph*)]
    \item predictive accuracy of triadic reasoning,
    \item stability and generalization under algebraic constraints,
    \item improvements over binary-semirings and pairwise-interaction baselines.
\end{enumerate}

\subsubsection{Metrics}

We adopt established metrics for multi-relational prediction:
\begin{itemize}[leftmargin=2em]
    \item \textbf{Mean Reciprocal Rank (MRR)} — measures ranking quality,  
    \item \textbf{Hits@k (k=1,3,10)} — success of predicting correct entities,  
    \item \textbf{Rule Satisfaction Rate} — for logical-rule datasets.  
\end{itemize}

These metrics align closely with the structure of triadic tasks.

\subsubsection{Baselines}

We compare NTS against established binary and triadic baselines:

\begin{itemize}[leftmargin=2em]
    \item \textbf{Binary Neural Semiring Models:}  
    Architectures that reduce triples to sequential binary interactions
    such as $x_h \otimes (x_r \otimes x_t)$, which appear in early
    embedding-based methods for relational learning \cite{Yang2015,Nickel2016}.

    \item \textbf{Translational and Bilinear Knowledge Graph Models:}  
    These include widely used embedding schemes such as TransE and its variants,
    along with bilinear or multiplicative scoring functions.  
    A strong representative of this class is the SimpleE model of
    Kazemi and Poole \cite{KazemiPoole2018}, which provides a robust 
    and competitive baseline for link-prediction tasks.

    \item \textbf{Neural Tensor or Tri-linear Models:}  
    Tensor-based architectures such as the Neural Tensor Network
    \cite{Socher2013} encode triple interactions but do not impose
    algebraic regularity.
\end{itemize}

\subsubsection{Hypothesized Performance Patterns}

We anticipate the following:

\paragraph{Superior triadic modeling.}
NTS should outperform binary semiring models whenever triple interactions 
are non-decomposable into pairwise components.

\paragraph{Improved generalization.}
Algebraic regularizers constrain the hypothesis space, reducing overfitting and 
improving extrapolation to unseen combinations.

\paragraph{Stability under data sparsity.}
In sparse relational domains, inductive biases improve robustness, 
particularly for underrepresented relations.

\paragraph{Ablation insights.}
Removing $\mathcal{L}_{\mathrm{assoc}}$ or $\mathcal{L}_{\mathrm{dist}}$
is expected to reduce performance, demonstrating the necessity of algebraic discipline.

\subsection{Interpretation of Expected Results}

If empirical results match the theoretical expectations, this would confirm:

\begin{itemize}[leftmargin=2em]
    \item that ternary algebraic structure provides a genuine computational advantage,
    \item that algebraic regularization strengthens model consistency and 
          generalization,
    \item that triadic symbolic reasoning benefits significantly from  
          TGS-like inductive biases.
\end{itemize}

Taken together, the anticipated experimental trends reinforce the 
mathematical motivation for NTS and demonstrate its practical relevance 
in AI systems requiring structured reasoning.


\section{Discussion and Conclusion}\label{sec:discussion}

The Neural Ternary Semiring (NTS) introduced in this work represents a principled
synthesis between algebraic structure and learnable computation.
By elevating ternary composition to a first-class operator and grounding it in the
axioms of ternary $\Gamma$-semirings (TGS), NTS brings together two traditions:
\begin{itemize}[leftmargin=2em]
    \item the algebraic study of generalized semiring structures,
    \item and the modern trend in artificial intelligence toward models capable
          of symbolic, relational, and higher-order reasoning.
\end{itemize}

This section reflects on the broader implications of the framework, its
conceptual advantages over existing approaches, and its potential to serve as a
foundation for new classes of neuro-symbolic systems.

\subsection{Bridging Algebra and Neural Computation}

A central contribution of this paper is the demonstration that algebraic
constraints---particularly associativity and distributivity---can coexist with
differentiable learning.  
The NTS framework shows that:
\begin{enumerate}[label=(\alph*)]
    \item algebraic structure can be enforced softly through regularization,
    \item neural models can be guided toward mathematically consistent operators,
    \item triadic reasoning is more naturally represented by ternary operators 
          than by cascaded binary interactions.
\end{enumerate}

The outcome is a model class where:
\[
[x,y,z]_\gamma
\quad\text{is simultaneously}\quad
\text{algebraically meaningful and computationally trainable}.
\]

This dual compatibility is rare in neural-symbolic systems, many of which rely
either on heuristic fusion operators or on rigid algebraic forms that cannot
benefit from data-driven flexibility.
NTS occupies a balanced middle ground.

\subsection{Advantages Over Binary Semiring Frameworks}

Traditional semiring-based neural layers model composition through binary
operations $x\otimes y$.  
While highly successful in dynamic programming, parsing, and probabilistic
inference, binary models face inherent limitations when encoding triples or
higher-arity relations.

The NTS framework addresses these issues directly:
\begin{itemize}[leftmargin=2em]
    \item \textbf{Native Triadic Semantics.}  
    The ternary operation $[x,y,z]$ represents three-way interactions without
    factorization.

    \item \textbf{Reduced Representational Bias.}  
    There is no need to impose arbitrary evaluation orders such as
    $x\otimes(y\otimes z)$ or $(x\otimes y)\otimes z$.

    \item \textbf{Interpretability of Relations.}  
    Relation parameters $\gamma$ modulate the ternary interaction, allowing
    explicit modeling of contextual roles.

    \item \textbf{Generalization Benefits.}  
    TGS axioms serve as an inductive bias that stabilizes learning and improves
    performance on sparse datasets.
\end{itemize}

These advantages highlight why ternary algebraic structures are more suitable
than binary ones for tasks involving relational reasoning.

\subsection{Long-Term Directions and Broader Impact}

The introduction of NTS suggests several research directions at the intersection
of algebra, category theory, and artificial intelligence.

\subsubsection{\texorpdfstring{$n$}{n}-ary \texorpdfstring{$\Gamma$}{Gamma}-semirings}

Ternary composition is a natural starting point, but many symbolic domains require
higher-arity algebraic structures. Extending the present framework to
$n$-ary $\Gamma$-semirings would place NTS within a broader algebraic lineage that
encompasses classical semiring theory and its generalisations
\cite{Golan1999,HebischWeinert1998}. Such extensions could support multi-agent
interactions, higher-order logical rules, and the algebraic modelling of
combinatorial structures arising in program synthesis or structured decision
making.
.
Extending TGS to \emph{$n$-ary $\Gamma$-semirings} could support:
\begin{itemize}[leftmargin=2em]
    \item multi-agent interactions with more than three entities,
    \item higher-order logic rules,
    \item combinatorial structures in program synthesis.
\end{itemize}

The algebraic foundations developed in this paper generalize naturally to such
settings, and the learnable framework of NTS can be extended to $n$-ary operators
through attention- or tensor-based parameterizations.

\subsubsection{(B) Categorical Semantics}

Ternary operations admit categorical interpretations through multi-object
morphisms and enriched composition laws. Studying NTS within a categorical
framework may clarify how ternary composition behaves under functorial
transformations and how multi-argument operators preserve coherence and
associativity. Such perspectives connect naturally to the foundational work of
Mac~Lane on compositional algebra and category theory \cite{MacLane1998},
suggesting that NTS can be viewed as a structured morphism in an enriched
categorical setting.

.
Exploring NTS from a categorical perspective could:
\begin{itemize}[leftmargin=2em]
    \item clarify the compositional semantics of ternary reasoning,
    \item connect NTS to diagrammatic or monoidal structures,
    \item identify functorial relationships between $n$-ary semiring models.
\end{itemize}
From a logical and type-theoretic standpoint, the structural identities satisfied
by the ternary operator resemble intensional equalities studied in modern
foundational systems such as Homotopy Type Theory \cite{HoTT2013}. These
frameworks emphasize the role of identity types, higher-order coherence, and the
algebraic behaviour of composed maps—properties that align conceptually with the
requirements of a stable ternary operator. Such perspectives complement the
categorical viewpoint and highlight the deep structural semantics underlying the
NTS framework.

Such an approach would be valuable for establishing deep structural properties
of NTS and for designing compositional learning systems.

\subsubsection{(C) Neural Interpretations and Semantic Layers}

On the computational side, NTS suggests:
\begin{itemize}[leftmargin=2em]
    \item \textbf{Neural semantic layers} where each $[x,y,z]_\gamma$ acts as a
          differentiable inference step,
    \item \textbf{Graph neural networks} with ternary message passing,
    \item \textbf{Hybrid symbolic–neural engines} capable of encoding both rules
          and learned relational patterns,
    \item \textbf{Energy-based or variational models} built on ternary potentials.
\end{itemize}

The ternary operator thus serves as a versatile building block for future
neuro-symbolic architectures.

\subsection{Limitations and Practical Considerations}

Although NTS provides a principled framework, several practical considerations
merit attention:
\begin{itemize}[leftmargin=2em]
    \item tensor-based ternary operations may be computationally expensive for
          large-scale datasets,
    \item the choice of regularization strength affects convergence to
          TGS-axiom satisfaction,
    \item careful initialization is required to avoid degenerate solutions
          (e.g., all-zero embeddings),
    \item some domains may require symmetric or constrained versions of the
          ternary operator.
\end{itemize}

These considerations do not reduce the value of the framework but indicate
opportunities for refinement.

\subsection{Final Remarks}

This paper establishes the Neural Ternary Semiring as a unified algebraic and
neural model for triadic reasoning.
The framework:
\begin{itemize}[leftmargin=2em]
    \item draws its structure from the theory of ternary $\Gamma$-semirings,
    \item integrates seamlessly with gradient-based learning,
    \item provides a mathematically principled operator for higher-order AI tasks,
    \item and suggests new research directions for both algebraists and AI practitioners.
\end{itemize}

The broader impact of this work lies in showing that algebraic structure is not
merely a formal tool but can actively guide the design of learnable systems.  
By embedding triadic reasoning into neural computation, NTS offers a foundation
for the next generation of neuro-symbolic algorithms—more expressive,
mathematically coherent, and aligned with the structure of real-world relational
data.This integration of algebraic structure with differentiable computation
resonates with broader developments in representation learning and
neural-symbolic reasoning, where mathematical structure plays an active role in
shaping neural models \cite{Bengio2013Representation,GoriSerafini2018}. The NTS
framework illustrates how algebraic semantics can inform neural architectures,
providing operators that preserve symbolic meaning while remaining fully
trainable.

\section*{Acknowledgements}

Chandrasekhar Gokavarapu expresses his deep gratitude to Dr.~D.~Madhusudhana Rao
for his supervision, guidance, and continuous academic support throughout the
development of this research programme on ternary $\Gamma$-semirings.
The authors sincerely thank Dr.~Ramachandra R.~K., Principal,
Government College (Autonomous), Rajahmundry, for providing a productive and
encouraging research environment. Both authors gratefully acknowledge the
Department of Mathematics, Acharya Nagarjuna University, for its academic
support and for fostering a collaborative atmosphere that contributed to the
completion of this work.

\section*{Funding}

This research received no specific grant from any funding agency in the public,
commercial, or not-for-profit sectors. All computational experiments were
supported by institutional facilities made available to the authors.

\section*{Author Contributions}

\textbf{Chandrasekhar Gokavarapu:} Conceptualization of the Neural Ternary
Semiring framework; development of algebraic theory; formulation of the
ternary operator and regularization strategy; mathematical analysis; preparation
of the first complete manuscript draft.

\textbf{D.~Madhusudhana Rao:} Supervision; theoretical guidance on
ternary $\Gamma$-semirings; critical revision of algebraic foundations;
refinement of structural arguments; manuscript review and editorial oversight.

Both authors discussed the results, contributed to the interpretation of the
findings, and approved the final manuscript.

\section*{Data Availability}

No external datasets were used in the preparation of this manuscript.
All experimental designs described are conceptual frameworks intended for
future empirical validation. Any synthetic examples used in illustration can be
reproduced directly from the mathematical definitions provided in the article.

\section*{Conflict of Interest}

The authors declare that there is no conflict of interest regarding the
publication of this manuscript.



\begin{thebibliography}{99}

\bibitem{Golan1999}
J.~S.~Golan,
\newblock \emph{Semirings and Their Applications},
\newblock Springer, 1999.

\bibitem{HebischWeinert1998}
U.~Hebisch and H.~J.~Weinert,
\newblock \emph{Semirings: Algebraic Theory and Applications},
\newblock World Scientific, 1998.

\bibitem{Golan1992}
J.~S.~Golan,
\newblock \emph{The Theory of Semirings with Applications in Mathematics and Computer Science},
\newblock Longman Scientific \& Technical, 1992.

\bibitem{Nobusawa1964}
N.~Nobusawa,
\newblock \emph{$\Gamma$-rings},
\newblock J. Sci. Hiroshima Univ. Ser. A-I Math. \textbf{28} (1964), 181–192.

\bibitem{Barnes1966}
W.~E.~Barnes,
\newblock \emph{On the $\Gamma$-rings of Nobusawa},
\newblock Pacific J. Math. \textbf{18} (1966), 411–422.

\bibitem{Zadeh1965}
L.~A.~Zadeh,
\newblock \emph{Fuzzy sets},
\newblock Information and Control \textbf{8} (1965), 338–353.

\bibitem{Goodfellow2016}
I.~Goodfellow, Y.~Bengio, and A.~Courville,
\newblock \emph{Deep Learning},
\newblock MIT Press, 2016.

\bibitem{Bengio2013Representation}
Y.~Bengio, A.~Courville, and P.~Vincent,
\newblock \emph{Representation learning: A review and new perspectives},
\newblock IEEE Trans. Pattern Anal. Mach. Intell. \textbf{35} (2013), 1798–1828.

\bibitem{Garcez2019}
A.~d’Avila Garcez, M.~Gori, L.~C.~Lamb, L.~Serafini, M.~Spranger, and S.~N.~Tran,
\newblock \emph{Neural-symbolic computing},
\newblock arXiv:1905.06088 (2019).

\bibitem{GoriSerafini2018}
M.~Gori and L.~Serafini,
\newblock \emph{Logic tensor networks: Deep learning and logical reasoning from data and knowledge},
\newblock arXiv: 1806.01776 (2018).

\bibitem{Yang2015}
B.~Yang, W.~Y.~W.~t. Yih, X.~He, J.~Gao, and L.~Deng,
\newblock \emph{Embedding entities and relations for learning and inference in knowledge bases},
\newblock Proc. ICLR (2015).

\bibitem{Nickel2016}
M.~Nickel, K.~Murphy, V.~Tresp, and E.~Gabrilovich,
\newblock \emph{A review of relational machine learning for knowledge graphs},
\newblock Proc. IEEE \textbf{104} (2016), 11–33.

\bibitem{Nickel2011}
M.~Nickel, V.~Tresp, and H.-P.~Kriegel,
\newblock \emph{A three-way model for collective learning on multi-relational data},
\newblock Proc. ICML (2011), 809–816.

\bibitem{Socher2013}
R.~Socher, D.~Chen, C.~D.~Manning, and A.~Ng,
\newblock \emph{Reasoning with neural tensor networks for knowledge base completion},
\newblock Proc. NIPS (2013), 926–934.

\bibitem{KazemiPoole2018}
S.~M.~Kazemi and D.~Poole,
\newblock \emph{Simple embedding for link prediction in knowledge graphs},
\newblock Proc. NeurIPS (2018), 4289–4300.

\bibitem{Schlichtkrull2018}
M.~Schlichtkrull et al.,
\newblock \emph{Modeling relational data with graph convolutional networks},
\newblock Proc. ESWC (2018), 593–607.

\bibitem{Hamilton2017}
W.~Hamilton, Z.~Ying, and J.~Leskovec,
\newblock \emph{Inductive representation learning on large graphs},
\newblock Proc. NIPS (2017), 1025–1035.

\bibitem{Murphy2012}
K.~P.~Murphy,
\newblock \emph{Machine Learning: A Probabilistic Perspective},
\newblock MIT Press, 2012.

\bibitem{HoTT2013}
Univalent Foundations Program,
\newblock \emph{Homotopy Type Theory: Univalent Foundations of Mathematics},
\newblock Institute for Advanced Study, 2013.

\bibitem{MacLane1998}
S.~Mac Lane,
\newblock \emph{Categories for the Working Mathematician},
\newblock 2nd ed., Springer, 1998.

\end{thebibliography}
\end{document}